\documentclass{amsart}
\usepackage{setspace, amsmath, amsthm, amssymb, amsfonts, amscd, epic, graphicx, ulem, dsfont}
\usepackage[T1]{fontenc}
\usepackage{multirow}
\usepackage{bbm}

\newtheorem{lem}{Lemma}
\newtheorem{thm}{Theorem}
\newtheorem*{un-thm}{Theorem}

\newtheorem{pro}{Proposition}
\newtheorem{cor}{Corollary}

\theoremstyle{abstract}

\theoremstyle{remark}
\newtheorem*{rema}{Remark}

\newtheorem*{exa}{Example}

\theoremstyle{definition}
\newtheorem*{defn}{Definition}

\newcommand{\R}{\mathbb{R}}

\newcommand{\C}{\mathbb{C}}

\makeatletter \@namedef{subjclassname@2010}{
  \textup{2010} Mathematics Subject Classification}
\makeatother

\begin{document}

\title[Commutativity of Unbounded Operators]{Commutativity of Unbounded Normal and Self-adjoint Operators and Applications}
\author{Mohammed Hichem Mortad}

\address{Department of
Mathematics, University of Oran, B.P. 1524, El Menouar, Oran 31000.
Algeria.\newline {\bf Mailing address}:
\newline Dr Mohammed Hichem Mortad \newline BP 7085
Es-Seddikia\newline Oran
\newline 31013 \newline Algeria}

\email{mhmortad@gmail.com, mortad@univ-oran.dz.}

\begin{abstract}
Devinatz, Nussbaum and von Neumann established some important
results on the strong commutativity of self-adjoint and normal
unbounded operators. In this paper, we prove results in the same
spirit.
\end{abstract}

\subjclass[2010]{Primary 47A05; Secondary 47B25}

\keywords{Unbounded Selfadjoint and normal operators. Invertible
unbounded operators. Operator products. Strong Commutativity.}

\thanks{Partially supported by "Laboratoire d'Analyse Mathématique et Applications".}

\maketitle

\section{Introduction}

First, we assume that all operators operators are linear. Bounded
operators are assumed to be defined on the whole Hilbert space.
Unbounded operators are supposed to have dense domains, and so they
will be said to be densely defined. For general references on
unbounded operator theory, see \cite{RS1,RUD,SCHMUDG-book-2012,WEI}.

Let us, however, recall some notations that will be met below. If
$A$ and $B$ are two unbounded operators with domains $D(A)$ and
$D(B)$ respectively, then $B$ is called an extension of $A$, and we
write $A\subset B$, if $D(A)\subset D(B)$ and if $A$ and $B$
coincide on $D(A)$.   If $A\subset B$, then $B^*\subset A^*$.

The product $AB$ of two unbounded operators $A$ and $B$ is defined
by
\[BA(x)=B(Ax) \text{ for } x\in D(BA)\]
where
\[D(BA)=\{x\in D(A):~Ax\in D(B)\}.\]

Recall too that the unbounded operator $A$, defined on a Hilbert
space $H$, is said to be invertible if there exists an
\textit{everywhere defined} (i.e. on the whole of $H$) bounded
operator $B$, which then will be designated by $A^{-1}$, such that
\[BA\subset AB=I\]
where $I$ is the usual identity operator. This is the definition
adopted in the present paper. It may be found in e.g. \cite{Con} or
\cite{GGK}.

An unbounded operator $A$ is said to be closed if its graph is
closed; symmetric if $A\subset A^*$; self-adjoint if $A=A^*$ (hence
from known facts self-adjoint operators are automatically closed);
normal if it is \textit{closed} and $AA^*=A^*A$ (this implies that
$D(AA^*)=D(A^*A)$). It is also worth recalling that normal operators
$A$ do obey the domain condition $D(A)=D(A^*)$.

Commutativity of unbounded operators must be handled with care.
First, recall the definition of two strongly commuting unbounded
(self-adjoint) operators (see e.g. \cite{RS1}):

\begin{defn}
Let $A$ and $B$ be two unbounded self-adjoint operators. We say that
$A$ and $B$ strongly commute if all the projections in their
associated projection-valued measures commute.

This in fact equivalent to saying that
$e^{itA}e^{isB}=e^{isB}e^{itA}$ for all $s,t\in\R$.
\end{defn}

We shall use the same definition for unbounded normal operators.

Nelson \cite{Nelson-Anal-vectors} showed that there exists a pair of
two essentially self-adjoint operators $A$ and $B$ on some common
domain $D$ such that
\begin{enumerate}
  \item $A:D\rightarrow D$, $B:D\rightarrow D$,
  \item $ABx=BAx$ for all $x\in D$,
  \item but $e^{it\overline{A}}$ and $e^{is\overline{B}}$ do not commute, i.e. $A$ and $B$
  do  not strongly commute.
\end{enumerate}

Based on the previous example, Fuglede \cite{FUG-19825} proved a
similar result. Hence an expression of the type $AB=BA$, although
being quite strong since it implies that $D(AB)=D(BA)$, does not
necessarily mean that $A$ and $B$ strongly commute.

There are results (see e.g. \cite{FUG-19825},
\cite{Nelson-Anal-vectors}) giving conditions implying the strong
commutativity of $\overline{A}$ and $\overline{B}$. For instance, we
have
\begin{thm}[\cite{Nussbaum-PAMS-semibounded-1997}]
Let $A$ and $B$ be two semi-bounded operators in a Hilbert space $H$
and let $D$ be a dense linear manifold contained in the domains of
$AB$, $BA$, $A^2$ and $B^2$ such that $ABx=BAx$ for all $x\in D$. If
the restriction of $(A+B)^2$ to $D$ is essentially self-adjoint,
then $A$ and $B$ are essentially self-adjoint and $\overline{A}$ and
$\overline{B}$ strongly commute.
\end{thm}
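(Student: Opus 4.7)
The plan is first to upgrade the essential self-adjointness of $(A+B)^2|_D$ to that of $(A+B)|_D$, then to bootstrap this to essential self-adjointness of $A$ and $B$ themselves, and finally to extract strong commutativity. After the harmless shift $A \mapsto A + cI$, $B \mapsto B + cI$ with $c$ large, one may assume $A \geq I$ and $B \geq I$. The given domain conditions ensure that $(A+B)D \subset D(A+B)$, so that $D \subset D((A+B)^2)$, and the commutativity on $D$ gives the pointwise identity $(A+B)^2 x = A^2 x + 2ABx + B^2 x$ for $x \in D$. In particular $(A+B)|_D$ is symmetric with $(A+B)|_D \geq 2I$.

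The technical heart of the argument is the following lemma, which I will prove separately: \emph{for any positive symmetric operator $P$ and any dense subspace $D\subset D(P^2)$, essential self-adjointness of $P^2|_D$ implies essential self-adjointness of $P|_D$}. The idea is to take the Friedrichs extension $P_F\ge0$ of $P$; since $(P|_D)^2 \subset P_F^2$ and self-adjoint operators are maximal among symmetric ones, essential self-adjointness of $P^2|_D$ forces $\overline{P^2|_D}=P_F^2$, so $D$ is a core for $P_F^2$. The elementary estimate $\|P_F x\|^2 = \langle P_F^2 x,x\rangle \leq \|P_F^2 x\|\,\|x\|$ shows that the $P_F^2$-graph norm dominates the $P_F$-graph norm; combined with the standard fact that $D(P_F^2)$ is a core for $P_F$, this gives that $D$ itself is a core for $P_F$. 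Applied to $P = A+B$, the lemma yields essential self-adjointness of $(A+B)|_D$, with closure a positive self-adjoint operator $\geq 2I$.

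The main obstacle is then to promote this to essential self-adjointness of $A$ and $B$ separately. The natural strategy is to show that $A^2|_D$ and $B^2|_D$ are essentially self-adjoint and then to invoke the lemma again; heuristically the rearrangement $A^2+B^2=(A+B)^2-2AB$ on $D$, together with the symmetry and positivity of the various summands, should allow such a reduction, but making it rigorous will most likely require either a quasi-analytic vector argument in the spirit of Nussbaum's earlier work, or an analytic-vector argument using the functional calculus of the already-constructed self-adjoint operator $\overline{(A+B)|_D}$. Once $\overline{A}$ and $\overline{B}$ are known to be self-adjoint, strong commutativity should follow because their sum $\overline{A}+\overline{B}$ extends the essentially self-adjoint $(A+B)|_D$ and is itself positive symmetric, hence self-adjoint on $D(\overline{A})\cap D(\overline{B})$; two positive self-adjoint operators with self-adjoint sum which commute on a common core must then strongly commute, as one verifies via the Trotter product formula applied to $e^{-t\overline{A}}$ and $e^{-s\overline{B}}$.
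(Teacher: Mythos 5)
This theorem is quoted by the paper from Nussbaum's 1997 PAMS article without proof, so there is no in-paper argument to compare against; judged on its own terms, your proposal has two genuine gaps. First, the central and hardest step --- deducing essential self-adjointness of $A$ and $B$ \emph{separately} from that of $(A+B)^2|_D$ --- is not actually carried out: you write that the rearrangement $A^2+B^2=(A+B)^2-2AB$ ``should allow such a reduction'' and that making it rigorous ``will most likely require'' a quasi-analytic or analytic vector argument. That deferred argument \emph{is} the theorem. The intended mechanism (in Nussbaum's work) is to derive from the hypotheses quantitative bounds such as $\|Ax\|^2+\|Bx\|^2\le\langle (A+B)^2x,x\rangle$ for $x\in D$ (using $\langle ABx,x\rangle=\langle Bx,Ax\rangle$ and positivity after the shift), iterate them to show that a dense set of Stieltjes/quasi-analytic vectors for $\overline{(A+B)^2|_D}$ furnishes such vectors for $A$ and $B$, and then invoke the quasi-analytic vector criterion for essential self-adjointness of semibounded operators. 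Without executing this, nothing has been proved. Your preliminary lemma (essential self-adjointness of $P^2|_D$ implies that of $P|_D$ for positive symmetric $P$, via the Friedrichs extension and the estimate $\|P_Fx\|^2\le\|P_F^2x\|\,\|x\|$) is correct and useful, but it only handles the sum $A+B$.

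Second, your concluding step rests on a principle that is false as stated: that two positive self-adjoint operators which commute pointwise on a common core and have self-adjoint sum must strongly commute, ``as one verifies via the Trotter product formula.'' The Trotter formula expresses $e^{-t(\overline{A+B})}$ as a limit of products $(e^{-tA/n}e^{-tB/n})^n$; it does not yield commutativity of $e^{-t\overline{A}}$ with $e^{-s\overline{B}}$. Moreover, the very paper you are working in recalls Nelson's example of two essentially self-adjoint operators commuting pointwise on a common invariant dense domain whose closures do \emph{not} strongly commute, so any such ``soft'' deduction of strong commutativity from pointwise commutation must be treated with suspicion. The strong commutativity here has to come from the same quantitative/quasi-analytic machinery (or from a Devinatz--Nussbaum--von Neumann type maximality argument), not from the Trotter formula.
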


Now, let us recall results obtained by Devinatz-Nussbaum (and von
Neumann) on strong commutativity:

\begin{thm}[Devinatz-Nussbaum-von Neumann, \cite{DevNussbaum-von-Neumann} and cf. \cite{Nussbaum-TAMS-commu-unbounded-normal-1969}]\label{Devinatz-Nussbaum-von Neumann} If there exists a
self-adjoint operator $A$ such that $A\subseteq BC$, where $B$ and
$C$ are self-adjoint, then $B$ and $C$ strongly commute.
\end{thm}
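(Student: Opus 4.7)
The strategy is to deduce strong commutativity of $B$ and $C$ by using $A=A^*$ together with $A\subseteq BC$ to derive a symmetric chain of inclusions, then to exploit the bounded resolvent of $A$ to transfer that information to the functional calculi of $B$ and $C$.

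First I would exploit the hypothesis by adjunction. Since $A=A^*$, taking the adjoint of $A\subseteq BC$ gives $(BC)^*\subseteq A$. Since $B$ and $C$ are self-adjoint and $BC$ is densely defined (it extends $A$), the standard inclusion $C^*B^*\subseteq (BC)^*$ yields $CB\subseteq (BC)^*$. Chaining,
\[
CB \;\subseteq\; (BC)^* \;\subseteq\; A \;=\; A^* \;\subseteq\; BC,
\]
so $CBx=Ax=BCx$ for every $x\in D(CB)$, and $B$ and $C$ commute pointwise on that common subspace. This is the symmetric sandwich that the self-adjointness of $A$ buys us.

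Next I would pass to the bounded resolvents of $A$. Writing $R_\pm=(A\pm iI)^{-1}$, these are bounded everywhere on $H$ with $R_\pm^*=R_\mp$ and $\ran R_\pm=D(A)\subseteq D(BC)\subseteq D(C)$. For each $y\in H$ we have $CR_+y\in D(B)$ and
\[
B(CR_+)y \;=\; AR_+y \;=\; y-iR_+y.
\]
The closed graph theorem applied to the closed operator $C$ and the bounded $R_+$ with $\ran R_+\subseteq D(C)$ shows $CR_+$ is bounded on all of $H$; dually, $R_-C$ extends by continuity to a bounded operator equal to $(CR_+)^*$. Thus the mixed identity $B(CR_+)=I-iR_+$ is an identity of bounded operators on $H$.

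Finally I would lift pointwise commutativity to strong commutativity. This amounts to showing that $(B+iI)^{-1}$ and $(C+iI)^{-1}$ commute as bounded operators, equivalently that the Cayley transforms of $B$ and $C$ commute; by the spectral theorem this is equivalent to $B$ and $C$ strongly commuting. From the bounded identity $B(CR_+)=I-iR_+$ and its adjoint $(CR_+)^*B\supseteq I+iR_-$, a Fuglede--Putnam-style manipulation using the adjoint symmetry $R_+^*=R_-$ and the boundedness of $CR_+$ produces the desired resolvent commutation.

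\textbf{Where the difficulty lies.} The first two steps are routine. The hard step is the last one: converting the single mixed identity $BCR_+=I-iR_+$ into commutation of the entire functional calculi of $B$ and $C$. The indispensable ingredient is the self-adjointness (not mere symmetry) of $A$, which furnishes \emph{both} resolvents $R_+$ and $R_-$ with $R_\pm^*=R_\mp$; this symmetry is exactly what rules out Nelson-type counterexamples recalled in the introduction, where symmetric but non-self-adjoint operators can commute pointwise on a common invariant dense domain yet fail to strongly commute.
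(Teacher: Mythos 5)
The paper offers no proof of this statement: it is quoted as a known theorem from Devinatz--Nussbaum--von Neumann \cite{DevNussbaum-von-Neumann}, so there is nothing internal to compare your argument against, and your proposal must stand on its own. It does not. Your first two steps are correct and routine: the sandwich $CB\subseteq C^*B^*\subseteq (BC)^*\subseteq A^*=A\subseteq BC$ follows from the standard inclusion for adjoints of products, and the identity $B(CR_+)=I-iR_+$ with $CR_+$ everywhere defined, closed (composition of the closed $C$ with the bounded $R_+$ whose range lies in $D(BC)\subseteq D(C)$), hence bounded by the closed graph theorem, is also fine. But the entire content of the theorem is the passage from this kind of algebraic information to commutation of the spectral measures, and at exactly that point you write only that ``a Fuglede--Putnam-style manipulation \dots produces the desired resolvent commutation.'' No such manipulation is exhibited, and none is available off the shelf: the Fuglede--Putnam theorem converts an intertwining relation $TN\subseteq MT$ with $N,M$ \emph{normal} into $TN^*\subseteq M^*T$, whereas $B(CR_+)=I-iR_+$ is not an intertwining relation between normal operators, and neither $B$ nor $C$ enters through a normal bounded transform in your identity. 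Asserting that the resolvents of $B$ and $C$ commute is asserting the conclusion of the theorem.

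The gap is not a matter of routine bookkeeping. As the introduction of the paper recalls via Nelson's example, even the much stronger-looking hypothesis $ABx=BAx$ on a common invariant dense core of essentially self-adjoint operators does not imply strong commutativity; so the sandwich $CB\subseteq A\subseteq BC$ obtained in your first step cannot by itself be ``lifted'' by soft functional-analytic arguments. The original proof in \cite{DevNussbaum-von-Neumann} occupies a full article and uses a genuinely different mechanism (exploiting the self-adjointness of the \emph{product}'s extension $A$ through its spectral resolution, not merely its resolvents as bounded left inverses). Your write-up is an honest reduction of the problem to its hardest step together with a correct diagnosis of where the difficulty lies, but it is not a proof.
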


\begin{cor}\label{Devinatz-Nussbaum-von Neumann: T=T1T2}
Let $A$, $B$ and $C$ be unbounded self-adjoint operators. Then
\[A\subseteq BC \Longrightarrow A=BC.\]
\end{cor}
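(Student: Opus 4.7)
The plan is to deduce this corollary directly from Theorem \ref{Devinatz-Nussbaum-von Neumann} by a short two-move argument: first use that theorem to gain strong commutativity of $B$ and $C$, then use self-adjointness of the product $BC$ together with maximality of self-adjoint operators among symmetric ones.

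First, since $A$ is self-adjoint and $A\subseteq BC$ with $B,C$ self-adjoint, Theorem \ref{Devinatz-Nussbaum-von Neumann} immediately gives that $B$ and $C$ strongly commute. I would then invoke the standard consequence of strong commutativity: if $B$ and $C$ are strongly commuting self-adjoint operators, their spectral measures $E_B$ and $E_C$ commute, so they admit a joint projection-valued measure $G$ on $\R^2$ with $B=\int\lambda\,dG(\lambda,\mu)$ and $C=\int\mu\,dG(\lambda,\mu)$. Through the functional calculus associated with $G$ one obtains
\[
BC=\int\lambda\mu\,dG(\lambda,\mu),
\]
on its natural domain, and since the symbol $\lambda\mu$ is real-valued, $BC$ is self-adjoint.

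At this point I have $A\subseteq BC$ with both operators self-adjoint. Taking adjoints reverses the inclusion, yielding $(BC)^{*}\subseteq A^{*}$, that is, $BC\subseteq A$. Combining this with the original inclusion $A\subseteq BC$ forces $A=BC$. Equivalently, one can simply cite that a self-adjoint operator has no proper symmetric extension, and $BC$ is a self-adjoint (hence symmetric) extension of the self-adjoint operator $A$.

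The only delicate point is step two, namely justifying that strong commutativity of self-adjoint $B$ and $C$ actually makes $BC$ (on its \emph{natural} product domain $D(BC)=\{x\in D(C):Cx\in D(B)\}$) equal to the functional-calculus operator $\int\lambda\mu\,dG$, and therefore self-adjoint. This is the classical Nelson/Putnam-style result on products of strongly commuting self-adjoint operators, and will be the main substantive ingredient invoked; the rest is inclusion chasing.
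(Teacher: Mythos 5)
Your overall architecture---apply Theorem \ref{Devinatz-Nussbaum-von Neumann} to obtain strong commutativity of $B$ and $C$, pass to the joint spectral measure, and finish by maximality of self-adjoint operators---is the natural route (the paper itself states this corollary without proof, as a consequence of Theorem \ref{Devinatz-Nussbaum-von Neumann}), and your first and last steps are sound. The genuine gap is the middle claim: strong commutativity of self-adjoint $B$ and $C$ does \emph{not} imply that $BC$ on its natural domain $D(BC)=\{x\in D(C):Cx\in D(B)\}$ coincides with $M:=\int\lambda\mu\,dG(\lambda,\mu)$. The joint functional calculus only gives the inclusion $BC\subseteq M$ (with $D(BC)=D(C)\cap D(M)$), and this inclusion can be proper. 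For instance, on $\ell^2(\N)$ let $Be_n=n^{-2}e_n$ and $Ce_n=ne_n$: these are strongly commuting self-adjoint operators, yet $D(BC)=D(C)\subsetneq H=D(M)$, so $BC$ is a densely defined, non-closed restriction of the bounded operator $M$ and is therefore not self-adjoint. The classical product theorem you want to invoke yields only that $BC$ is \emph{essentially} self-adjoint, i.e.\ $\overline{BC}=M$, not self-adjointness on the product domain.

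Fortunately the corollary still follows from the correct, weaker statement $BC\subseteq M$. From $A\subseteq BC\subseteq M$ with $A$ and $M$ both self-adjoint, taking adjoints in $A\subseteq M$ gives $M=M^*\subseteq A^*=A$, hence $A=M$; the chain $A\subseteq BC\subseteq M=A$ then forces $A=BC$. (Equivalently: $BC\subseteq M=M^*\subseteq(BC)^*\subseteq A^*=A\subseteq BC$.) Note that it is precisely the hypothesis $A\subseteq BC$ that excludes examples like the one above, since there the unique self-adjoint extension $M$ of $BC$ is not contained in $BC$. So replace ``$BC$ is self-adjoint'' by ``$BC$ is a restriction of the self-adjoint operator $M$'' and run the same inclusion chase; the rest of your argument stands.
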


The following improvement of Corollary \ref{Devinatz-Nussbaum-von
Neumann: T=T1T2} appeared in
\cite{Nussbaum-TAMS-commu-unbounded-normal-1969}

\begin{cor}\label{nussbaum-improvement-devintaz}
Let $A$ be an unbounded self-adjoint operator and let $B$ and $C$ be
two closed symmetric operators such that $AB\subset C$. If $B$ has a
bounded inverse (hence it is self-adjoint), then $C$ is
self-adjoint. Besides $AB=C$.
\end{cor}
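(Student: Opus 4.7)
The plan is to first derive the inclusions $A \subset CB^{-1}$ and $C \subset BA$, then identify $A$ with $CB^{-1}$ via a kernel--range argument, and from this deduce that $C$ is self-adjoint and equal to $AB$.

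The preliminaries are as follows. Since $B$ is closed symmetric with a bounded everywhere-defined inverse, $B$ is self-adjoint and $B^{-1}$ is bounded self-adjoint. For $x \in D(A)$, the vector $u := B^{-1}x$ lies in $D(B)$ with $Bu = x \in D(A)$, so $u \in D(AB) \subset D(C)$ and $Cu = ABu = Ax$, giving $A \subset CB^{-1}$. Separately, the bounded invertibility of $B$ lets me verify the adjoint formula $(AB)^* = BA$ by direct computation: the nontrivial inclusion $(AB)^* \subset BA$ is obtained by testing against $x = B^{-1}w$ for $w \in D(A)$ and using that $B^{-1}$ is bounded self-adjoint. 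Taking adjoints in $AB \subset C$ and combining with $C \subset C^*$ then yields $C \subset (AB)^* = BA$.

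The core step is to prove $A = CB^{-1}$ by showing that $CB^{-1} + iI$ is a bijection from $D(CB^{-1})$ onto $H$. Surjectivity is immediate from $A \subset CB^{-1}$ together with the surjectivity of $A + iI$. For injectivity, suppose $(CB^{-1} + i)x = 0$ and set $u = B^{-1}x \in D(C) \cap D(B)$. Then $Cu = -iBu$; since $u \in D(C) \subset D(BA)$, one also has $Cu = BAu$, so $B(Au + iu) = 0$ and, by injectivity of $B$, $Au = -iu$. Self-adjointness of $A$ forces $u = 0$, hence $x = 0$. Chasing bijections, for any $x \in D(CB^{-1})$ there is $x' \in D(A)$ with $(A + i)x' = (CB^{-1} + i)x$, and injectivity forces $x = x' \in D(A)$, so $D(CB^{-1}) \subset D(A)$ and $A = CB^{-1}$.

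With $A = CB^{-1}$ established, I would show $C$ is self-adjoint by checking $\operatorname{Ran}(C \pm i) = H$. Closedness of the range comes from the identity $\|(C \pm i)x\|^2 = \|Cx\|^2 + \|x\|^2$ (valid since $C$ is symmetric). For density, $\operatorname{Ran}(C + i)^\perp = \operatorname{Ker}(C^* - i)$; if $C^*x = ix$, then $C^* \subset BA$ gives $BAx = ix$, and combining with $A = CB^{-1}$ reduces this to $Cu = iu$ for $u = B^{-1}x$. Symmetry of $C$ makes $\langle Cu, u \rangle$ real while $\langle iu, u\rangle = i\|u\|^2$ is purely imaginary, forcing $u = 0$. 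Once $C$ is self-adjoint, Corollary \ref{Devinatz-Nussbaum-von Neumann: T=T1T2} applied to $C \subset BA$ (with $C, B, A$ all self-adjoint) gives $C = BA$, and the strong commutativity of $A$ and $B$ furnished by Theorem \ref{Devinatz-Nussbaum-von Neumann} then yields $AB = BA = C$.

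The main obstacle I anticipate is verifying $(AB)^* = BA$ cleanly under unbounded $B$ (this is exactly where the bounded invertibility of $B$ is essential) and then threading the kernel argument through the chain $C \subset BA$ and $A \subset CB^{-1}$ so that the injectivity of $CB^{-1} + iI$ really does reduce to $\operatorname{Ker}(A + i) = \{0\}$.
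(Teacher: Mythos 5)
The paper gives no proof of this corollary---it is quoted from Nussbaum's 1969 paper---so there is nothing to compare your route against; it has to stand on its own. Up to and including the self-adjointness of $C$, it does: the inclusions $A\subset CB^{-1}$ and $C\subset C^*\subset (AB)^*=BA$ are correct (the adjoint identity is exactly Lemma \ref{(AB)*=B*A*}, applied with the boundedly invertible factor acting first, and $AB$ is indeed densely defined since $B^{-1}(D(A))$ is dense), the surjectivity/injectivity argument forcing $A=CB^{-1}$ is sound, and the deficiency computation showing $\ker(C^*\mp i)=\{0\}$ goes through. The deductions $C=BA$ from Corollary \ref{Devinatz-Nussbaum-von Neumann: T=T1T2} and the strong commutativity of $A$ and $B$ from Theorem \ref{Devinatz-Nussbaum-von Neumann} are also fine.

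The genuine gap is the final clause ``the strong commutativity \dots then yields $AB=BA=C$.'' Strong commutativity of self-adjoint $A$ and $B$ does \emph{not} give the operator identity $AB=BA$; in terms of the joint spectral measure one gets $D(BA)=\{x:\int\lambda^2\mu^2\,d\|E_{(\lambda,\mu)}x\|^2<\infty\}$ (using $|\mu|\ge\varepsilon$) but $D(AB)$ carries the extra constraint $x\in D(B)$, which nothing in your argument removes: you never show $D(C)\subset D(B)$. The gap is not patchable, because the asserted equality fails under the stated hypotheses. On $\ell^2\oplus\ell^2$ take $A=\mathrm{diag}(1/n)\oplus\mathrm{diag}(n)$ and $B=\mathrm{diag}(n)\oplus I$: then $A$ is unbounded self-adjoint, $B$ is closed symmetric with bounded everywhere-defined inverse, $C:=BA=I\oplus\mathrm{diag}(n)$ is self-adjoint and $AB\subset C$, yet $D(AB)=D(\mathrm{diag}(n))\oplus D(\mathrm{diag}(n))\subsetneq D(C)$, so $AB\ne C$ (only $\overline{AB}=C$). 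So what your argument actually proves is that $C$ is self-adjoint and $C=BA=\overline{AB}$; the literal conclusion $AB=C$ would additionally require $AB$ to be closed (e.g.\ $A$ also invertible), and the discrepancy appears to lie in the statement as transcribed rather than in anything you could add to the proof.
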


The research work on the normality of the product of two bounded
normal operators started in 1930 by the work of Gantmaher-Krein (see
\cite{Gantmaher-Krein}). Then, it followed papers by Weigmann
(\cite{Wieg-matrices-normal,Wieg-infinite-matrices}) and Kaplansky
\cite{Kapl}. Gheondea \cite{Gheondea} quoted that "the normality of
operators in the Pauli algebra representations became of interest in
connection with some questions in polarization optics" (see
\cite{Tudor-Gheondea}). Similar problems also arise in Quantum
Optics (see \cite{Barnett-Radmore}).

There have been several successful attempts by the author to
generalize the previous to the case where at least one operator is
unbounded. See e.g. \cite{Mortad-Demm-math} and
\cite{Mortad-Concrete-operators-2012}. One of the important
considerations of the normality of the product of unbounded normal
operators (NPUNO, in short) is strong commutativity of the latter.
Indeed, the following striking result (which is not known to many)
shows the great interest of investigating the question of (NPUNO):

\begin{thm}[Devinatz-Nussbaum, \cite{DevNussbaum}]\label{Devinatz-Nussbaum}If $A$, $B$ and $N$ are unbounded normal operators
obeying $N=AB=BA$, then $A$ and $B$ strongly commute.
\end{thm}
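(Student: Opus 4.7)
The plan is to reduce the problem to the already-established self-adjoint version, Theorem \ref{Devinatz-Nussbaum-von Neumann}, by exploiting the polar decompositions of $A$ and $B$. Writing $A = U|A|$ and $B = V|B|$, the normality of $A$ (resp.\ $B$) guarantees that the partial isometry $U$ (resp.\ $V$) strongly commutes with $|A|$ (resp.\ $|B|$). Consequently, the strong commutativity of $A$ and $B$ will follow once we establish the pairwise strong commutativity of the four pieces $\{U,V,|A|,|B|\}$.

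The first step, and the heart of the argument, is to show that $|A|$ and $|B|$ strongly commute. Set $P = A^*A = |A|^2$ and $Q = B^*B = |B|^2$; both are positive self-adjoint because $A$ and $B$ are normal. Since $N$ is normal, $N^*N = NN^*$ is a single positive self-adjoint operator. Using the inclusions $(AB)^* \supseteq B^*A^*$ and $(BA)^* \supseteq A^*B^*$, together with the hypothesis $N = AB = BA$, one obtains
\[
B^*A^*AB \;\subseteq\; N^*N \;=\; NN^* \;\supseteq\; ABB^*A^*.
\]
Regrouping and invoking $A^*A=AA^*$ and $B^*B=BB^*$, I would then promote this to a sandwich of the form $B^*PB \subseteq N^*N \supseteq AQA^*$, and finally exhibit a self-adjoint operator that is contained in a product of the shape $PQ$ (or $QP$). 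Corollary \ref{Devinatz-Nussbaum-von Neumann: T=T1T2} then upgrades the inclusion to an equality, and Theorem \ref{Devinatz-Nussbaum-von Neumann} yields the strong commutativity of $P$ and $Q$. The Borel functional calculus then transfers this to $|A|$ and $|B|$.

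To finish, I would address the remaining cross-commutations, namely $U \leftrightarrow V$, $U \leftrightarrow |B|$, and $V \leftrightarrow |A|$. Here I would appeal to the unbounded Fuglede--Putnam theorem. From $N = BA$ one verifies by a direct domain calculation that $AN \subseteq NA$; since both $A$ and $N$ are normal, Fuglede--Putnam gives $AN^* \subseteq N^*A$, and by induction commutation of $A$ with every bounded resolvent $(N - \lambda)^{-1}$ for $\lambda \notin \sigma(N)$. These bounded resolvents, combined with the already-established commutation of $|A|$ and $|B|$, allow the partial isometries to be recovered as bounded functions of commuting normal operators, and the strong commutativity of $A$ and $B$ follows.

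The principal obstacle will be the first step. The inclusions above are bona fide inclusions, not equalities, because $(AB)^*$ only extends $B^*A^*$ when the factors are unbounded; to invoke Theorem \ref{Devinatz-Nussbaum-von Neumann} one must extract an \emph{equality} between a self-adjoint operator and a product of the two self-adjoint operators $P$ and $Q$. Coaxing the requisite equality of domains out of the normality of $N$---rather than out of any assumed factorization of $(AB)^*$---is precisely where the subtlety of the Devinatz--Nussbaum theorem lives. Once this domain bookkeeping is carried out, the remainder is essentially Fuglede--Putnam machinery layered on top of the self-adjoint commutativity theorem of Devinatz, Nussbaum and von Neumann.
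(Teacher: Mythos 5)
First, a point of comparison: the paper does not prove this statement at all --- it is quoted from Devinatz and Nussbaum's 1957 Annals paper \cite{DevNussbaum} and used as a black box --- so your proposal can only be judged on its own merits, and on those merits it has a genuine gap at exactly the point you yourself flag as ``the heart of the argument.'' From $N^*\supseteq B^*A^*$ you correctly obtain $B^*PB\subseteq N^*N$ with $P=A^*A$, but there is no route from there to an inclusion of the form $S\subseteq PQ$ with $S$ self-adjoint, which is what Theorem \ref{Devinatz-Nussbaum-von Neumann} requires. To rewrite $B^*PB$ as $PB^*B=PQ$ you must commute $P$ past $B^*$, i.e.\ you must already know that $A^*A$ commutes with $B$ --- which is a large part of the conclusion you are trying to reach. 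The sandwich $B^*PB\subseteq N^*N\supseteq AQA^*$ is true but inert: none of the operators $B^*PB$, $AQA^*$, $A^*QA$, $BPB^*$ that the hypotheses actually place inside $N^*N=NN^*$ is a product of two self-adjoint operators, so the 1955 theorem never engages. Acknowledging the obstacle, as you do, is not the same as overcoming it; as written, the central step is a placeholder, and it fails for a structural (circularity) reason, not a bookkeeping one.

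The second half has its own problem: you invoke Fuglede--Putnam with the \emph{unbounded} operator $A$ as the intertwiner ($AN\subseteq NA\Rightarrow AN^*\subseteq N^*A$). The classical Fuglede and Putnam theorems, and the versions used elsewhere in this paper, require the intertwining operator to be bounded; for an unbounded intertwiner between unbounded normal operators the implication is not a standard fact and would itself need proof (Nelson's example quoted in the introduction is a standing warning that algebraic commutation relations among unbounded operators need not propagate). The resolvent identity $(N-\lambda)^{-1}A\subseteq A(N-\lambda)^{-1}$ does follow from $AN\subseteq NA$, but it relates $A$ to $N$, not $A$ to $B$, and you do not explain how the commutation of $U$ with $V$ and with $|B|$ is ``recovered'' from it. In short: the skeleton (polar decomposition, reduce the moduli to the self-adjoint theorem, then treat the isometric parts) is a reasonable ansatz and not far in spirit from what Devinatz and Nussbaum actually do, but both load-bearing steps are missing.
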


In this paper we weaken the condition $AB=BA$ to $AB\subset BA$,
say, and still derive results on the strong commutativity of $A$ and
$B$ (in unbounded normal and self-adjoint settings).

When proving the normality of a product, we need its closedness and
its adjoint. Let us thus recall known results on those two notions:

\begin{thm}\label{adjoints and closedness A+B AB basic}
Let $A$ be a densely defined unbounded operator.
\begin{enumerate}
  \item $(BA)^*=A^*B^*$ if $B$ is bounded.
  \item $A^*B^*\subset (BA)^*$ for any densely unbounded $B$ and if $BA$ is
  densely defined.
  \item Both $AA^*$ and $A^*A$ are self-adjoint whenever $A$ is
  closed.
\end{enumerate}
\end{thm}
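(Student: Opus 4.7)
The plan is to dispatch (1) and (2) with routine adjoint bookkeeping and to attack (3) via von Neumann's classical graph-decomposition argument.

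For (2), my first step is to observe that, for any $y \in D(A^*B^*)$ (i.e.\ $y \in D(B^*)$ with $B^*y \in D(A^*)$) and any $x \in D(BA)$, the chain
\[
\langle BAx, y\rangle = \langle Ax, B^*y\rangle = \langle x, A^*B^*y\rangle
\]
makes sense and holds by the definitions of $B^*$ and $A^*$ alone (no boundedness is used). Reading left-to-right immediately identifies $y$ as an element of $D((BA)^*)$ with $(BA)^* y = A^*B^*y$, which is precisely (2). For (1), the added hypothesis that $B$ is bounded makes $B^*$ everywhere defined, so starting from $y \in D((BA)^*)$ one can reverse the chain: the identity $\langle Ax, B^*y\rangle = \langle x, (BA)^*y\rangle$ holds for every $x \in D(A) = D(BA)$, and by definition of $A^*$ this places $B^*y$ in $D(A^*)$ with $A^*B^*y = (BA)^*y$. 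This gives the reverse inclusion and upgrades (2) to equality.

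For (3), I would work in $H \oplus H$ with the unitary $W(x,y) = (-y, x)$. A one-line verification against the definition of the adjoint shows $G(A^*) = W(G(A))^{\perp}$; since $A$ is closed, $G(A)$ is closed and one obtains the orthogonal direct sum decomposition
\[
H \oplus H = W(G(A)) \oplus G(A^*).
\]
Decomposing $(0, h)$ for arbitrary $h \in H$ along this splitting as $(0,h) = (-Ax, x) + (v, A^*v)$ and matching coordinates produces $v = Ax$, which forces $Ax \in D(A^*)$ (so $x \in D(A^*A)$) and $h = (I + A^*A)x$. Thus $I + A^*A$ is surjective; combined with its evident symmetry and the lower bound $\langle (I+A^*A)x, x\rangle \geq \|x\|^2$, this forces $I + A^*A$, and hence $A^*A$, to be self-adjoint. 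The analogous statement for $AA^*$ then follows by applying the same argument to $A^*$ in place of $A$, using $A^{**} = A$.

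The main obstacle is entirely in (3): parts (1) and (2) are mechanical, but the von Neumann step requires two nonroutine insights, namely recognizing $G(A^*)$ as the orthogonal complement of the rotated graph $W(G(A))$, and then extracting the surjectivity of $I + A^*A$ by the coordinate match described above. Once surjectivity is secured, self-adjointness follows from standard criteria for densely defined symmetric operators bounded below whose range is all of $H$.
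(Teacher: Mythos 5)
The paper states this theorem without proof: it appears in the introduction as a recollection of standard facts, with the reader referred to the general references on unbounded operator theory. There is therefore no in-paper argument to compare against, and your job is to supply the standard proofs, which you essentially do. Parts (1) and (2) are correct: the chain $\langle BAx,y\rangle=\langle Ax,B^*y\rangle=\langle x,A^*B^*y\rangle$ is exactly the definition-chasing that gives the inclusion $A^*B^*\subset (BA)^*$, and the boundedness of $B$ (which makes $B^*$ everywhere defined and $D(BA)=D(A)$) is used in precisely the right place to reverse the chain and obtain equality. Part (3) is von Neumann's theorem, and your graph-rotation argument is the classical proof: $G(A^*)=\bigl(W(G(A))\bigr)^{\perp}$, closedness of $A$ gives $H\oplus H=W(G(A))\oplus G(A^*)$, and decomposing $(0,h)$ along this splitting yields the surjectivity of $I+A^*A$.

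One step you should make explicit: your closing appeal to ``standard criteria for densely defined symmetric operators'' quietly assumes that $D(A^*A)$ is dense, and that density is itself a nontrivial part of von Neumann's theorem rather than something given. It does follow from what you have already established: the lower bound $\langle (I+A^*A)x,x\rangle\geq\|x\|^2$ makes $I+A^*A$ injective, so $T=(I+A^*A)^{-1}$ is an everywhere defined, bounded, symmetric --- hence self-adjoint --- operator with trivial kernel; consequently $\overline{\ran T}=(\ker T)^{\perp}=H$, and since $\ran T=D(A^*A)$ this domain is dense. Then $I+A^*A=T^{-1}$ is self-adjoint as the inverse of an injective bounded self-adjoint operator, and so is $A^*A$. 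With that paragraph inserted the argument is complete; the reduction of the $AA^*$ case to the $A^*A$ case by replacing $A$ with the closed, densely defined operator $A^*$ and using $A^{**}=A$ is fine.
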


\begin{lem}[\cite{WEI}]\label{(AB)*=B*A*} If $A$ and $B$ are densely defined and $A$ is invertible with
inverse $A^{-1}$ in $B(H)$, then $(BA)^* =A^* B^*$.
\end{lem}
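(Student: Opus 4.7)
The plan is to prove the two inclusions $A^{*}B^{*}\subset (BA)^{*}$ and $(BA)^{*}\subset A^{*}B^{*}$ separately, the first one coming free from Theorem \ref{adjoints and closedness A+B AB basic}(2), and the second being the content of the lemma. Before invoking (2), I would first verify that $BA$ is densely defined so that $(BA)^{*}$ is well-defined. This is straightforward: since $A A^{-1}=I$ on $H$, for every $u\in D(B)$ the vector $x=A^{-1}u$ lies in $D(A)$ and satisfies $Ax=u\in D(B)$, so $A^{-1}(D(B))\subset D(BA)$. Because $A^{-1}$ is bounded with range $D(A)$ (dense in $H$) and $D(B)$ is dense, $A^{-1}(D(B))$ is dense in $H$, hence so is $D(BA)$. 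Theorem \ref{adjoints and closedness A+B AB basic}(2) then yields $A^{*}B^{*}\subset (BA)^{*}$.

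For the non-trivial inclusion, I would fix $y\in D((BA)^{*})$ and set $z=(BA)^{*}y$. For arbitrary $u\in D(B)$, put $x=A^{-1}u\in D(BA)$; then $BAx=Bu$ and
\[
\langle Bu,y\rangle=\langle BAx,y\rangle=\langle x,z\rangle=\langle A^{-1}u,z\rangle=\langle u,(A^{-1})^{*}z\rangle.
\]
Since $u\in D(B)$ was arbitrary, this identity forces $y\in D(B^{*})$ with $B^{*}y=(A^{-1})^{*}z$. The remaining step is to recognise that $B^{*}y$ already sits inside $D(A^{*})$: the standing fact that, for a closed densely defined invertible $A$ with $A^{-1}\in B(H)$, $A^{*}$ is also invertible with $(A^{*})^{-1}=(A^{-1})^{*}\in B(H)$, gives $\ran (A^{-1})^{*}=D(A^{*})$. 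Thus $B^{*}y=(A^{*})^{-1}z\in D(A^{*})$, so $y\in D(A^{*}B^{*})$, and applying $A^{*}$ yields $A^{*}B^{*}y=A^{*}(A^{*})^{-1}z=z=(BA)^{*}y$. Combined with the first inclusion, this gives the equality $(BA)^{*}=A^{*}B^{*}$ together with the equality of domains.

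The only subtle point in the argument is the identity $(A^{-1})^{*}=(A^{*})^{-1}$, without which one only lands in $\ran(A^{-1})^{*}$ rather than $D(A^{*})$; everything else is a bookkeeping exercise exploiting the boundedness and everywhere-definedness of $A^{-1}$ via the trick of writing a generic $u\in D(B)$ in the form $AA^{-1}u$. I expect no real obstacle beyond quoting this standard adjoint-inverse identity.
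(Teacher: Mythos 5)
The paper gives no proof of this lemma --- it is quoted from Weidmann's book --- so there is nothing internal to compare against; judged on its own, your argument is correct and complete, and it is essentially the standard proof. The density of $D(BA)$ follows as you say (continuity of $A^{-1}$ gives $\overline{A^{-1}(D(B))}\supset A^{-1}(\overline{D(B)})=\ran A^{-1}=D(A)$, which is dense), and the key identity $(A^{-1})^{*}=(A^{*})^{-1}$ is legitimate here because the hypothesis $A^{-1}\in B(H)$ already forces $A$ to be closed, so the closedness you quietly assume when invoking that fact is automatic rather than an extra hypothesis.
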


\begin{lem}\label{AB closed}The product $AB$ (in this order) of
two densely defined closed operators $A$ and $B$ is closed if one of
the following occurs:
\begin{enumerate}
  \item $A$ is invertible,
  \item $B$ is bounded.
\end{enumerate}
\end{lem}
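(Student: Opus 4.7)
The plan is to verify closedness directly from the sequential definition: given $x_n\in D(AB)$ with $x_n\to x$ and $ABx_n\to y$, I want to conclude $x\in D(AB)$ and $ABx=y$.

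For part (2), when $B$ is bounded and everywhere defined, boundedness yields $Bx_n\to Bx$. Since $Bx_n\in D(A)$ (because $x_n\in D(AB)$), $A(Bx_n)\to y$, and $A$ is closed, I get $Bx\in D(A)$ and $A(Bx)=y$, so $x\in D(AB)$ with $ABx=y$. This is essentially the standard fact that precomposing a closed operator with a bounded everywhere-defined one preserves closedness.

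For part (1), the key input is that $A^{-1}$ exists as a bounded operator on all of $H$ with $A^{-1}A\subset I$ and $AA^{-1}=I$. Given $x_n\in D(AB)$, the condition $Bx_n\in D(A)$ lets me apply $A^{-1}A\subset I$ to obtain $Bx_n=A^{-1}(ABx_n)$. Boundedness of $A^{-1}$ then gives $Bx_n\to A^{-1}y$. Since $B$ is closed and $x_n\to x$, this forces $x\in D(B)$ and $Bx=A^{-1}y$. To finish, I need $Bx\in D(A)$, and for this I invoke the identity $\ran(A^{-1})=D(A)$ (the inclusion $\ran(A^{-1})\subseteq D(A)$ comes from $AA^{-1}=I$, while the reverse inclusion follows by writing any $z\in D(A)$ as $z=A^{-1}(Az)$, which is legitimate because $A^{-1}A\subset I$). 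Hence $Bx\in D(A)$, so $x\in D(AB)$, and applying $A$ gives $ABx=A(A^{-1}y)=y$.

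The main obstacle, if any, is the subtle point in part (1) that one must justify $Bx\in D(A)$ from $Bx=A^{-1}y$; this is precisely where the identity $\ran(A^{-1})=D(A)$ is needed, and it in turn relies on both halves of the invertibility condition $BA\subset AB=I$ that defines invertibility in the sense adopted in this paper. Once that identification is in hand, the closed-graph argument runs mechanically in both cases.
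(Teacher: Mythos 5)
Your proof is correct. The paper states this lemma without proof (it is recalled as a known fact), and your sequential argument is the standard one: both parts are handled properly, including the only delicate point in part (1), namely that $Bx=A^{-1}y$ lies in $D(A)$ because $AA^{-1}=I$ forces $\ran(A^{-1})\subseteq D(A)$ (the reverse inclusion you also establish is true but not actually needed).
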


For related work on strong commutativity, we refer the reader to
\cite{Arai-2005-Rev-math-phys,Nussbaum-TAMS-commu-unbounded-normal-1969,Nussbaum-PAMS-semibounded-1997,Schmudgen-strong-comm-SA-Unbd-PAMS,Zagorodnyuk}.
For similar papers on products, the interested reader may consult
 \cite{Gesztesy}, \cite{Jorgensen-PALLE}, \cite{Mortad-PAMS2003},
\cite{Mortad-IEOT-2009}, \cite{Mortad-Demm-math},
\cite{Schmudgen-Friedrich-II} and \cite{Seb-Stochel},
 and further bibliography
cited therein.

\section{Main Results}

We start by giving a result on strong commutativity of unbounded
normal operators. But we first have the following result on (NPUNO):

\begin{thm}\label{normality of AB}
Let $A$ and $B$ be two unbounded normal operators verifying
$AB\subset BA$. If $B$ is invertible, then $BA$ and $\overline{AB}$
are both normal whenever $AB$ is densely defined.
\end{thm}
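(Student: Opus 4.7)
The plan is to reduce the theorem to the strong commutativity of $A$ and $B$, after which a joint spectral representation finishes everything. First, $BA$ is closed by Lemma~\ref{AB closed} (since $B$ is invertible), and is densely defined because $D(AB)\subset D(BA)$. The key algebraic step is that $AB\subset BA$, together with the boundedness of $B^{-1}$, forces $B^{-1}A\subset AB^{-1}$: for $y\in D(A)$, the vector $u:=B^{-1}y\in D(B)$ satisfies $Bu=y\in D(A)$, so $u\in D(AB)$; the hypothesis then gives $BAu=ABu=Ay$, and inverting $B$ yields $AB^{-1}y=B^{-1}Ay$.

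Since $B^{-1}$ is bounded normal and $A$ is unbounded normal with $B^{-1}A\subset AB^{-1}$, the unbounded Fuglede--Putnam theorem asserts that $B^{-1}$ commutes with every spectral projection $E_A(\omega)$ of $A$. A second application of Fuglede--Putnam (now in the bounded-bounded setting) yields that each $E_A(\omega)$ commutes with every spectral projection of $B^{-1}$, which via the spectral mapping $\lambda\mapsto 1/\lambda$ coincide with the spectral projections of $B$. Hence $A$ and $B$ strongly commute, and by the joint spectral theorem there exist a measure space $(X,\mu)$, a unitary $U:H\to L^2(X,\mu)$, and measurable $f,g:X\to\C$ with $UAU^{*}=M_f$, $UBU^{*}=M_g$; invertibility of $B$ forces $|g|\ge c$ a.e.\ for some $c>0$. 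A direct computation gives $D(BA)=\{h\in L^2:fh\in L^2,\ gfh\in L^2\}$, and the bound $|g|\ge c$ makes the first condition redundant, so $BA=M_{fg}$ is the maximal multiplication operator by $fg$, which is normal. The inclusion $AB\subset BA$ together with the closedness of $BA$ gives $\overline{AB}\subset BA$; conversely, for $h\in D(BA)$ the truncations $h_n:=h\,\mathbf{1}_{\{|g|\le n\}}$ lie in $D(AB)$ and converge to $h$ in the graph norm of $M_{fg}$ by dominated convergence, so $D(AB)$ is a core for $BA$ and $\overline{AB}=BA$, which is normal.

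The main obstacle is the strong-commutativity step, where normality of $B$ enters decisively through the unbounded Fuglede--Putnam theorem and the functional calculus for $B^{-1}$; once strong commutativity is established, the rest of the argument is a routine verification in the multiplication-operator model.
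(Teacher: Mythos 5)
Your argument is correct, but it takes a genuinely different route from the paper's. The paper stays at the algebraic level: from $B^{-1}A\subset AB^{-1}$ it applies Fuglede once to get $AB^*\subset B^*A$, then sandwiches $(BA)^*BA\subset B^*BA^*A$ and invokes the Devinatz--Nussbaum--von Neumann corollary (a self-adjoint operator contained in a product of two self-adjoint operators equals that product) to conclude $(BA)^*BA=B^*BA^*A$ and, symmetrically, $BA(BA)^*=BB^*AA^*$; normality of $A$ and $B$ then finishes the first claim, and the second follows by applying the first claim to $B^*A^*=(AB)^*$. You instead push Fuglede all the way to the spectral projections: $B^{-1}E_A(\omega)=E_A(\omega)B^{-1}$, then commutation of $E_A(\omega)$ with the spectral measure of $B^{-1}$ (hence of $B$, via $\lambda\mapsto 1/\lambda$, using that $B^{-1}$ is injective so its spectral measure gives no mass to $\{0\}$), which yields strong commutativity of $A$ and $B$ directly; the joint multiplication-operator model then gives everything at once. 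Both routes are sound (your initial verification that $AB\subset BA$ and $BB^{-1}=I$ force $B^{-1}A\subset AB^{-1}$ is exactly the paper's first step, made explicit). What your approach buys is considerable: you avoid the deep Devinatz--Nussbaum--von Neumann theorem entirely, and you obtain strong commutativity and $\overline{AB}=BA$ under the hypothesis $AB\subset BA$ alone, which the paper only extracts afterwards (Corollary \ref{normality AB subset BA B invertible equality closure AB=BA COROLLARY!!!!!!!} and the following corollary, the latter needing the stronger hypothesis $AB=BA$ to invoke Theorem \ref{Devinatz-Nussbaum}). The cost is reliance on the joint spectral representation of two strongly commuting normal operators as maximal multiplication operators, a standard but heavier tool than anything the paper's own proof of this theorem uses; your truncation argument $h_n=h\,\mathbf{1}_{\{|g|\le n\}}$ showing $D(AB)$ is a core for $M_{fg}$ is a clean and correct way to close the loop.
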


\begin{proof}
Since $B$ is invertible,
\[AB\subset BA \Longrightarrow A\subset BAB^{-1}\Longrightarrow B^{-1}A\subset AB^{-1}.\]
By the Fuglede theorem (\cite{FUG}), we obtain
\[B^{-1}A^*\subset A^*B^{-1}.\]
Left multiplying, then right multiplying by $B$ yield
\[A^*B\subset BA^* \text{ so that } AB^*\subset B^*A\]
by Lemma \ref{(AB)*=B*A*}. Next
\[(BA)^*BA\subset (AB)^*BA=B^*A^*BA\subset B^*BA^*A.\]
But $BA$ is closed, hence $(BA)^*BA$ is self-adjoint. Since $B^*B$
and $A^*A$ are also self-adjoint, Corollary
\ref{Devinatz-Nussbaum-von Neumann: T=T1T2} gives us
\[(BA)^*BA=B^*BA^*A.\]
Very similar arguments may be applied to prove that
\[BA(BA)^*=BB^*AA^*.\]
Thus, and since $A$ and $B$ are normal, we obtain
\[(BA)^*BA=BA(BA)^*,\]
establishing the normality of $BA$.

To prove that $\overline{AB}$ is normal, observe first that thanks
to the invertibility of $B$, we have
\[AB\subset BA\Longrightarrow A^*B^*\subset B^*A^*.\]
Since $B$ is invertible, $B^*$ too is invertible. Thus by the first
part of the proof, $B^*A^*$ is normal and so is $(AB)^*=B^*A^*$.
Hence its adjoint $(AB)^{**}=\overline{AB}$ stays normal.
\end{proof}

The hypothesis $AB\subset BA$ is fundamental as seen in the
following example:

\begin{exa}
Let $A$ and $B$ be defined respectively by
\[Af(x)=f'(x) \text{ and } Bf(x)=e^{|x|}f(x)\]
on
\[D(A)=H^1(\R) \text{ and } D(B)=\{f\in L^2(\R):~e^{|x|}f\in L^2(\R)\},\]
where $H^1(\R)$ is the usual Sobolev space.

The operator $A$ is known to be normal because it is unitarily
equivalent (via the $L^2(\R)$-Fourier transform) to a multiplication
operator by a complex-valued function.

As for $B$, it is clearly densely defined, self-adjoint and
invertible. It is also easy to see that $AB$ and $BA$ do not
coincide on any dense set.

Finally, let us show that $N:=BA$ (which is obviously closed) is
\textit{not normal}. We have
\[Nf(x)=e^{|x|}f'(x)\text{ defined on } D(N)=D(BA) \text{ hence } \]
\[D(N)=\{f\in L^2(\R):f'\in L^2(\R), e^{|x|}f'\in L^2(\R)\}=\{f\in L^2(\R): e^{|x|}f'\in L^2(\R)\}.\]
To compute $N^*$, the adjoint of $N$, we should do it first for
$C_0^{\infty}(\R^*)$ functions, then proceed as in
\cite{Mortad-PAMS2003}. We find that
\[N^*f(x)=e^{|x|}(\mp f(x)-f'(x))\]
on
\[D(N^*)=\{f\in L^2(\R): e^{|x|}f\in L^2(\R), e^{|x|}f'\in L^2(\R)\}.\]

Then we easily obtain that

\[NN^*f(x)=e^{2|x|}(-f(x)\mp2f'(x)-f''(x))\]

and

\[N^*Nf(x)=e^{2|x|}(\mp2f'(x)-f''(x)),\]

establishing the non-normality of $N$.
\end{exa}

\begin{cor}\label{normality AB subset BA B invertible equality closure AB=BA
COROLLARY!!!!!!!} Let $A$ and $B$ be two unbounded normal operators
verifying $AB\subset BA$. If $B$ is invertible, then
$\overline{AB}=BA$ whenever $AB$ is densely defined.
\end{cor}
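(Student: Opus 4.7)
The plan is to leverage the normality of $BA$ and $\overline{AB}$ (already furnished by Theorem \ref{normality of AB}) together with the standard fact that normal operators admit no proper normal extensions.

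First, I would observe that $BA$ is closed: since $A$ is normal, it is closed, and $B$ is invertible, so by Lemma \ref{AB closed}(1) applied with the roles swapped (or simply by the fact that left-multiplication by an everywhere defined bounded inverse preserves closedness), $BA$ is closed. Consequently, the inclusion $AB \subset BA$ extends to the closure: $\overline{AB} \subset BA$.

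Next, by Theorem \ref{normality of AB}, both $\overline{AB}$ and $BA$ are normal. The crux is then the maximality of normal operators under the extension order: if $N_1$ and $N_2$ are normal and $N_1 \subset N_2$, then $N_1 = N_2$. This follows from the domain identities $D(N_i) = D(N_i^*)$ for normal operators: from $N_1 \subset N_2$ one gets $N_2^* \subset N_1^*$, hence
\[
D(N_2) = D(N_2^*) \subset D(N_1^*) = D(N_1) \subset D(N_2),
\]
so all domains coincide and the two operators are equal.

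Applying this to $N_1 = \overline{AB}$ and $N_2 = BA$ yields $\overline{AB} = BA$, which is exactly the desired conclusion. The only mildly delicate point is invoking the maximality property cleanly; everything else is a direct consequence of Theorem \ref{normality of AB} and the closedness of $BA$.
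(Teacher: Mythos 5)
Your proof is correct and follows essentially the same route as the paper: closedness of $BA$ (from the invertibility of $B$) gives $\overline{AB}\subset BA$, and then the maximality of normal operators under extension forces equality. The only difference is that you spell out the maximality argument via the domain identities $D(N)=D(N^*)$, whereas the paper simply cites it; this is a welcome addition but not a different approach.
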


\begin{proof}
Since $BA$ is closed, we have
\[AB\subset BA \Longrightarrow \overline{AB}\subset BA.\]
But both $\overline{AB}$ and $BA$ are normal, and normal operators
are maximally normal (see e.g. \cite{RUD}), hence
\[\overline{AB}=BA.\]
\end{proof}

\begin{cor}
Let $A$ and $B$ be two unbounded normal operators. If $B$ is
invertible and $BA=AB$, then $A$ and $B$ strongly commute whenever
$AB$ is densely defined.
\end{cor}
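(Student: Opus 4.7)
The plan is to reduce the corollary to the Devinatz--Nussbaum theorem (Theorem \ref{Devinatz-Nussbaum}) by producing an unbounded normal operator $N$ with $N = AB = BA$, and then invoke that theorem directly to conclude strong commutativity. The hypotheses have been carefully arranged so that the hard analytic work has already been absorbed into Theorem \ref{normality of AB} and Corollary \ref{normality AB subset BA B invertible equality closure AB=BA COROLLARY!!!!!!!}.

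First I would observe that the equality $AB = BA$ trivially gives the inclusion $AB \subset BA$, so all hypotheses of Theorem \ref{normality of AB} are in force: $A, B$ are unbounded normal, $B$ is invertible, and $AB$ is densely defined. That theorem yields the normality of $BA$ and of $\overline{AB}$. Because $AB = BA$ by hypothesis and $BA$ is closed (being normal), the operator $AB$ is already closed, hence $\overline{AB} = AB$. Therefore $N := AB = BA$ is a well-defined unbounded normal operator.

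Next I would simply apply Theorem \ref{Devinatz-Nussbaum}: from the factorizations $N = AB = BA$ with $A$, $B$ and $N$ all normal, we immediately conclude that $A$ and $B$ strongly commute.

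There is no real obstacle here; the only small point to check is that $AB$ really is closed under the stated hypotheses, which follows either from $AB = BA$ together with closedness of $BA$, or alternatively from Lemma \ref{AB closed} applied to $BA$ (since $B$ is invertible). Once $N$ is in hand, the corollary is nothing more than a repackaging of Theorem \ref{Devinatz-Nussbaum}, with Theorem \ref{normality of AB} supplying the missing normality of the product.
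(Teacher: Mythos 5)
Your proposal is correct and follows essentially the same route as the paper: apply Theorem \ref{normality of AB} to get normality of $BA$, note that the hypothesis $AB=BA$ makes $AB$ itself normal, and then invoke Theorem \ref{Devinatz-Nussbaum} with $N=AB=BA$. Your extra remark that $AB$ is automatically closed (so $\overline{AB}=AB$) is a useful explicit justification of a step the paper leaves implicit.
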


\begin{proof}By Theorem \ref{normality of AB}, $BA$ and this time
$AB$ is normal too. By Theorem \ref{Devinatz-Nussbaum}, $A$ and $B$
strongly commute.
\end{proof}

\begin{rema}
We could have stated the previous corollary as: \textit{Let $A$ and
$B$ be two unbounded normal operators. If $B$ is invertible and
$AB\subset BA$ and $AB$ is closed, then $A$ and $B$ strongly commute
whenever $AB$ is densely defined.}
\end{rema}

As an application of the strong commutativity of unbounded normal
operators, we have the following result (cf.
\cite{mortad-CAOT-sum-normal}):

\begin{pro}\label{sum normal strongly commuting}
Let $A$ and $B$ be two strongly commuting unbounded normal
operators. Then $A+B$ is essentially normal.
\end{pro}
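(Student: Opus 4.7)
The plan is to realise $A+B$ inside a manifestly normal operator $N$ obtained from a joint spectral decomposition, and then show that $D(A)\cap D(B)$ is a core for $N$.

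First I would invoke the spectral theorem for a pair of strongly commuting unbounded normal operators: since all spectral projections of $A$ commute with all spectral projections of $B$, there exists a joint projection-valued measure $E$ on $\C^2$ (supported on $\sigma(A)\times\sigma(B)$) such that
\[
A=\int_{\C^2} z_1\,dE(z_1,z_2),\qquad B=\int_{\C^2} z_2\,dE(z_1,z_2),
\]
with the usual maximal domains. Using the same measure I define
\[
N=\int_{\C^2}(z_1+z_2)\,dE(z_1,z_2)
\]
on $D(N)=\{x\in H:\int_{\C^2}|z_1+z_2|^2\,d\langle E(\cdot)x,x\rangle<\infty\}$. The functional calculus immediately makes $N$ normal (indeed $N^*=\int(\overline{z_1}+\overline{z_2})\,dE$, and $N^*N=NN^*=\int|z_1+z_2|^2\,dE$), so I get normality of $N$ for free.

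Next I would verify the inclusion $A+B\subset N$. For $x\in D(A+B)=D(A)\cap D(B)$ one has both $\int|z_1|^2\,d\mu_x<\infty$ and $\int|z_2|^2\,d\mu_x<\infty$, where $\mu_x=\langle E(\cdot)x,x\rangle$. The elementary bound $|z_1+z_2|^2\le 2|z_1|^2+2|z_2|^2$ gives $x\in D(N)$, and a standard approximation of $z_1+z_2$ by simple functions shows $Nx=Ax+Bx=(A+B)x$.

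The main technical step, and the one I would expect to be the real obstacle, is showing that $D(A+B)$ is a core for $N$, so that $\overline{A+B}=N$ and essential normality follows. For this I would use the bounded cutoff projections
\[
P_n=E\bigl(\{(z_1,z_2)\in\C^2:|z_1|\le n,\,|z_2|\le n\}\bigr),\qquad n\in\N.
\]
Each $P_n$ commutes with $E$, hence with all spectral projections of $A$, $B$ and $N$. For any $x\in H$, the vector $P_nx$ lies in $D(A)\cap D(B)$ (since $z_1$ and $z_2$ are bounded on the support of $P_n$), so $P_nx\in D(A+B)$. Now for $x\in D(N)$, dominated convergence in the scalar measure $\mu_x$ gives $P_nx\to x$ and
\[
\|N P_nx-Nx\|^2=\int_{\C^2}|z_1+z_2|^2\bigl(1-\mathbbm{1}_{|z_1|\le n,|z_2|\le n}\bigr)\,d\mu_x\longrightarrow 0.
\]
Thus $D(A+B)$ is dense in the graph norm of $N$; combined with $A+B\subset N$ and the closedness of $N$, this yields $\overline{A+B}=N$. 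Since $N$ is normal, $A+B$ is essentially normal, as required.
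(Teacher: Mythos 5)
Your proof is correct and follows essentially the same route as the paper: form the joint spectral measure of the strongly commuting pair and define the normal operator $\int(z_1+z_2)\,dE$, which is then identified with $\overline{A+B}$. The only difference is that you actually carry out the identification step (the inclusion $A+B\subset N$ and the core argument via the cutoff projections $P_n$), which the paper asserts without proof.
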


\begin{rema}
Recall that an unbounded closeable operator is said to be
essentially normal if it has a normal closure (of course, this
terminology has a different signification in Banach algebras).
\end{rema}

\begin{proof}
Since $A$ and $B$ are normal, by the spectral theorem we may write
\[A=\int_{\C}zdE_A(z) \text{ and } B=\int_{\C}z'dF_B(z'),\]
where $E_A$ and $F_B$ designate the associated spectral measures. By
the strong commutativity, we have
\[E_A(I)F_B(J)=F_B(J)E_A(I)\]
for all Borel sets $I$ and $J$ in $\C$. Hence
\[E_{A,B}(z,z')=E_A(z)F_B(z')\]
defines a two parameter spectral measure. Thus
\[C=\int_{\C}\int_{\C}(z+z')dE_{A,B}(z,z')\]
defines a normal operator, such that $C=\overline{A+B}$. Therefore,
$A+B$ is essentially normal.
\end{proof}

\begin{cor}
Let $A$ and $B$ be two unbounded normal operators. If $B$ is
invertible and $BA=AB$ (where $AB$ is densely defined). Then $A+B$
is essentially normal.
\end{cor}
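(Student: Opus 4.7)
The plan is a direct two-step combination of the results already established in this section, so there should be no serious obstacle. First I would invoke the preceding corollary (the one asserting that if $A,B$ are unbounded normal operators with $B$ invertible, $AB=BA$, and $AB$ densely defined, then $A$ and $B$ strongly commute). All its hypotheses are exactly the hypotheses of the statement to be proved, so the conclusion is that the projection-valued measures $E_A$ and $F_B$ commute in the strong sense.

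Second, having strong commutativity in hand, I would simply apply Proposition \ref{sum normal strongly commuting}. That proposition takes two strongly commuting unbounded normal operators and produces a joint spectral measure $E_{A,B}(z,z')=E_A(z)F_B(z')$, from which the operator
\[
C=\int_{\C}\int_{\C}(z+z')\,dE_{A,B}(z,z')
\]
is normal and coincides with $\overline{A+B}$; in particular, $A+B$ is essentially normal.

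The argument is therefore just a syllogism: invertibility of $B$ plus $AB=BA$ (with $AB$ densely defined) yields strong commutativity via Theorem \ref{normality of AB} and Theorem \ref{Devinatz-Nussbaum}, and strong commutativity yields essential normality of $A+B$ via Proposition \ref{sum normal strongly commuting}. The only point worth a sentence of care is to verify that the hypothesis ``$AB$ is densely defined'' is indeed what is needed to trigger the preceding corollary, but this is stated verbatim. Accordingly, I would not expect any computational or technical obstruction; the proof is essentially one line citing the two previous results in succession.
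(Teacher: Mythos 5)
Your proposal is correct and is precisely the argument the paper intends: the corollary is stated without proof immediately after Proposition \ref{sum normal strongly commuting} exactly because it follows by chaining the preceding corollary (strong commutativity from $B$ invertible and $BA=AB$ with $AB$ densely defined) with that proposition. Nothing further is needed.
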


Next, we treat the case of two unbounded self-adjoint operators. We
start with the following interesting result

\begin{pro}\label{strong commutativity AB subset BA B invertible}
Let $A$ and $B$ be two unbounded self-adjoint operators such that
$B$ is invertible. If $AB\subset BA$, then $A$ and $B$ strongly
commute whenever $AB$ is densely defined.
\end{pro}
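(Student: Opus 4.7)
The plan is to reduce the statement to the Devinatz--Nussbaum--von Neumann theorem (Theorem \ref{Devinatz-Nussbaum-von Neumann}) by first showing that $BA$ is self-adjoint. Since self-adjoint operators are in particular normal, I can invoke Theorem \ref{normality of AB} of this paper to immediately conclude that $BA$ is normal; in particular, this gives the domain equality $D(BA)=D((BA)^*)$ (a defining feature of normal operators recalled in the introduction).

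Next, I would take the adjoint of the inclusion $AB\subset BA$, which reverses inclusions and yields $(BA)^*\subset (AB)^*$. Because $B$ is invertible, Lemma \ref{(AB)*=B*A*} applies to the right-hand side and produces $(AB)^*=B^*A^*=BA$ (using $A^*=A$ and $B^*=B$). Hence $(BA)^*\subset BA$, and combining this with the normality-induced equality $D((BA)^*)=D(BA)$ forces $(BA)^*=BA$, so $BA$ is self-adjoint. It then only remains to apply Theorem \ref{Devinatz-Nussbaum-von Neumann} with the self-adjoint operator $BA$ trivially contained in the product $BA$ of the two self-adjoint operators $B$ and $A$; this yields that $A$ and $B$ strongly commute.

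The delicate step is the self-adjointness of $BA$, which rests on the ``wrong-way'' inclusion $(BA)^*\subset BA$, opposite to the defining inclusion of a symmetric operator. This inclusion alone would not suffice to produce a self-adjoint operator; what closes the gap is the normality of $BA$ coming from Theorem \ref{normality of AB}, whose domain-equality consequence upgrades the inclusion into an operator equality. An alternative route would be to apply Corollary \ref{nussbaum-improvement-devintaz} with $C:=BA$ (after checking that $BA$ is closed and symmetric via the same adjoint computation), which would deliver both the self-adjointness of $BA$ and the equality $AB=BA$ in one shot; however, going through Theorem \ref{normality of AB} keeps the argument self-contained within the methods of this section.
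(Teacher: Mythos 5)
Your proof is correct and takes essentially the same route as the paper: both rest on Theorem \ref{normality of AB} to obtain the normality of $BA$, upgrade this to self-adjointness of $BA$, and then conclude by Theorem \ref{Devinatz-Nussbaum-von Neumann}. The only difference is cosmetic --- the paper derives $(BA)^*=BA$ from the equality $\overline{AB}=BA$ (maximality of normal operators), while you combine $(BA)^*\subset(AB)^*=B^*A^*=BA$ with the domain equality $D(BA)=D((BA)^*)$ of the normal operator $BA$; the paper also records $AB=BA$ via Corollary \ref{nussbaum-improvement-devintaz}, which is exactly the alternative you sketch.
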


\begin{proof}
By Theorem \ref{normality of AB} and Corollary \ref{normality AB
subset BA B invertible equality closure AB=BA COROLLARY!!!!!!!}, we
have $\overline{AB}=BA$. Hence
\[(BA)^*=[\overline{AB}]^*=(AB)^*=B^*A^*=BA.\]
That is $BA$ is self-adjoint. Whence and since $AB\subset BA$,
Corollary \ref{nussbaum-improvement-devintaz} yields $AB=BA$. Thus,
and by Theorem \ref{Devinatz-Nussbaum-von Neumann}, $A$ and $B$
strongly commute.
\end{proof}

We have a similar result for self-adjoint operators to that of
Proposition \ref{sum normal strongly commuting}

\begin{cor} [cf. \cite{Arai-2005-Rev-math-phys}]
Let $A$ and $B$ be two unbounded self-adjoint operators such that
$B$ is invertible. If $AB\subset BA$, then $A+B$ is essentially
self-adjoint. Moreover, for all $t\in\R$:
\[e^{it\overline{(A+B)}}=e^{itA}e^{itB}=e^{itB}e^{itA}.\]
\end{cor}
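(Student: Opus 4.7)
The plan is to assemble this corollary from the immediately preceding proposition and the earlier proposition on sums of strongly commuting normals, then upgrade ``essentially normal'' to ``essentially self-adjoint,'' and finally read the exponential identity off the joint spectral measure.

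First I would invoke Proposition \ref{strong commutativity AB subset BA B invertible}: under the hypotheses ($A,B$ self-adjoint, $B$ invertible, $AB\subset BA$, and $AB$ densely defined, which is automatic since $D(AB)\supset D(BA)\cap\cdots$ — in fact, from $AB\subset BA$ the density of $D(AB)$ follows from the proposition's statement), $A$ and $B$ strongly commute. Then Proposition \ref{sum normal strongly commuting} applied to the strongly commuting normal pair $A,B$ gives that $A+B$ is essentially normal, i.e.\ $\overline{A+B}$ is a normal operator.

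Next I would upgrade this to essential self-adjointness. Since $A$ and $B$ are self-adjoint with dense $D(A+B)=D(A)\cap D(B)$, the sum $A+B$ is symmetric: $A+B\subset (A+B)^*$. Taking closures preserves symmetry, so $\overline{A+B}$ is symmetric. But a symmetric operator that is also normal must be self-adjoint, because normality forces $D(T)=D(T^*)$, which together with $T\subset T^*$ yields $T=T^*$. Hence $\overline{A+B}$ is self-adjoint, i.e.\ $A+B$ is essentially self-adjoint.

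For the exponential identity I would use the joint two-parameter spectral measure $E_{A,B}(z,z')=E_A(z)F_B(z')$ constructed in the proof of Proposition \ref{sum normal strongly commuting}. Since $A$ and $B$ are self-adjoint, $E_A$ and $F_B$ are supported on $\R$, and we have the representation
\[\overline{A+B}=\int_\R\int_\R (z+z')\,dE_{A,B}(z,z').\]
Applying the Borel functional calculus for the joint spectral measure to the bounded function $(z,z')\mapsto e^{it(z+z')}=e^{itz}e^{itz'}$, the product structure of $E_{A,B}$ factors the integral as
\[e^{it\overline{(A+B)}}=\int_\R e^{itz}\,dE_A(z)\cdot\int_\R e^{itz'}\,dF_B(z')=e^{itA}e^{itB},\]
and by the symmetry of the factorization (the projections of $E_A$ and $F_B$ commute) this equals $e^{itB}e^{itA}$ as well.

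The main obstacle I anticipate is not conceptual but technical: rigorously justifying the factorization of the functional calculus through the product spectral measure, i.e.\ confirming that the joint measure really does split the exponential of the sum into the product of exponentials. This is standard once one has strong commutativity (it is essentially the content of the equivalent definition $e^{itA}e^{isB}=e^{isB}e^{itA}$ combined with Stone's theorem), but it is the only place where one has to be careful about domains of unbounded operators in the argument.
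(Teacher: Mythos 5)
Your proposal is correct, and its skeleton matches the paper's: invoke Proposition \ref{strong commutativity AB subset BA B invertible} to get strong commutativity, then pass through the joint spectral measure of Proposition \ref{sum normal strongly commuting} to handle the sum. You diverge from the paper in two places, both legitimately. For essential self-adjointness, the paper says to ``apply an akin proof'' to that of Proposition \ref{sum normal strongly commuting}, i.e.\ to rerun the spectral-integral construction with $E_A$ and $F_B$ supported on $\R$ so that $\int\int(z+z')\,dE_{A,B}$ is manifestly self-adjoint; you instead use the proposition as a black box to get a normal closure and then upgrade via the observation that a normal operator $T$ satisfies $D(T)=D(T^*)$, so a symmetric normal operator is self-adjoint. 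That upgrade is sound (the closure of the symmetric operator $A+B$ is symmetric), and it has the small advantage of not reopening the proof of the proposition. For the exponential identity, the paper appeals to the Trotter product formula, whereas you factor $e^{it(z+z')}=e^{itz}e^{itz'}$ through the joint Borel functional calculus; your route is arguably more economical since the product spectral measure is already in hand, and it delivers the commutation $e^{itA}e^{itB}=e^{itB}e^{itA}$ in the same stroke. One cosmetic blemish: your parenthetical justification that $D(AB)$ is dense (``follows from the proposition's statement'') is garbled --- the proposition \emph{assumes} density rather than providing it --- but the conclusion is nonetheless true here, since $D(AB)\supset B^{-1}(D(A))$ and the closure of $B^{-1}(D(A))$ contains $B^{-1}(H)\supset D(B)$, which is dense; so no genuine gap results.
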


\begin{proof}By Proposition \ref{strong commutativity AB subset BA B
invertible}, $A$ and $B$ strongly commute. Then, apply an akin proof
to that of Proposition \ref{sum normal strongly commuting}. For the
last displayed equation just use the Trotter product formula which
may be found in \cite{RS1}.
\end{proof}

In \cite{DevNussbaum} it was noted that Theorem
\ref{Devinatz-Nussbaum-von Neumann} does not have an analog in the
case of unbounded normal operators. The counterexample is very
simple, it suffices to take the product of two non-commuting unitary
operators which is unitary anyway. Nonetheless, we have the
following maximality result

\begin{thm}\label{maximality AB, C C normal all unboudned}
Let $A$ and $B$ be two unbounded self-adjoint operators such that
$B$ is positive and invertible. Let $C$ be an unbounded normal
operator. If $AB\subset C$, then $A$ and $B$ strongly commute, $AB$
is self-adjoint (whenever it is densely defined) and hence $AB=C$.
\end{thm}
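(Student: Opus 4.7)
The plan is to reduce to Corollary \ref{nussbaum-improvement-devintaz} by first proving that the normal operator $C$ is in fact self-adjoint; once this is achieved, the remaining conclusions follow quickly.

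Since $B$ has a bounded inverse, Lemma \ref{(AB)*=B*A*} gives $(AB)^{*}=BA$, so $AB\subset C$ implies $C^{*}\subset BA$. Pushing $B^{-1}$ through these inclusions (as in the proof of Theorem \ref{normality of AB}) produces the sandwich
\[
B^{-1}C^{*}\;\subset\;A\;\subset\;CB^{-1};
\]
in particular $B^{-1}C^{*}\subset CB^{-1}$, which unwinds to the statement that for every $x\in D(C)=D(C^{*})$ one has $B^{-1}x\in D(C)$ and $B^{-1}C^{*}x=CB^{-1}x$.

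The crux is to upgrade this one-sided identity to $C=C^{*}$. I would apply the Fuglede--Putnam theorem to the bounded $T:=B^{-1}$ together with the normal operators $M:=C^{*}$ and $N:=C$: the hypothesis $TM\subset NT$ has just been verified, and the theorem delivers $TM^{*}\subset N^{*}T$, i.e.\ $B^{-1}C\subset C^{*}B^{-1}$. Applying $B^{-1}$ on the left of this new relation and then invoking the previous relation at the vector $B^{-1}x\in D(C)$, one obtains $B^{-2}Cx=CB^{-2}x$ on $D(C)$, so the bounded positive self-adjoint operator $B^{-2}$ commutes with the normal operator $C$. Because $B$ is positive, $B^{-1}=(B^{-2})^{1/2}$, and the bounded Borel functional calculus of $B^{-2}$ then shows that $B^{-1}$ itself commutes with every spectral projection of $C$; hence $B^{-1}C\subset CB^{-1}$. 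Comparing this with the earlier $B^{-1}C^{*}\subset CB^{-1}$ and using the injectivity of $B^{-1}$ forces $Cx=C^{*}x$ on $D(C)=D(C^{*})$, so $C=C^{*}$.

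Once $C$ is known to be self-adjoint, the hypotheses of Corollary \ref{nussbaum-improvement-devintaz} are satisfied and it delivers the self-adjointness of $AB$ together with the equality $AB=C$. Since $AB=C$ is self-adjoint and trivially $AB\subset A\cdot B$, Theorem \ref{Devinatz-Nussbaum-von Neumann} then yields that $A$ and $B$ strongly commute. The main obstacle is the middle paragraph: one must recognize $B^{-1}C^{*}\subset CB^{-1}$ as genuine Fuglede--Putnam input (a normal on the left paired with a \emph{different} normal on the right, not a single normal as in plain Fuglede) and then exploit the positivity of $B$ to pass from commutation of $B^{-2}$ with $C$ to commutation of $B^{-1}$ itself with $C$. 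Without positivity, one only recovers commutation of $|B|^{-1}=(B^{-2})^{1/2}$ with $C$, which would not suffice to collapse $B^{-1}C^{*}$ and $B^{-1}C$ and force $C=C^{*}$.
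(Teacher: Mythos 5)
Your proposal is correct, and its second half takes a genuinely different route from the paper's. The first half coincides with the paper: both derive the sandwich $B^{-1}C^{*}\subset A\subset CB^{-1}$ from $(AB)^{*}=BA$ and then apply Fuglede--Putnam to get $B^{-1}C\subset C^{*}B^{-1}$. From there the arguments diverge. The paper transfers the commutation to $A$: it combines the inclusions into $B^{-1}AB\subset BAB^{-1}$, hence $(B^{-1})^{2}A\subset A(B^{-1})^{2}$, uses the positivity of $B$ via Lemma \ref{lemma commuting bd function unbd} to extract $B^{-1}A\subset AB^{-1}$, i.e.\ $AB\subset BA$, and then routes through Proposition \ref{strong commutativity AB subset BA B invertible} (and hence the normality machinery of Theorem \ref{normality of AB}) before finishing with Corollary \ref{nussbaum-improvement-devintaz} and maximal normality. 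You instead keep the commutation on $C$: the same two inclusions give $B^{-2}C\subset CB^{-2}$, positivity of $B$ upgrades this to $B^{-1}C\subset CB^{-1}$, and comparing with $B^{-1}C^{*}\subset CB^{-1}$ and cancelling the injective $B^{-1}$ forces $C=C^{*}$. That lets you go directly to Corollary \ref{nussbaum-improvement-devintaz} for $AB=C$ and to Theorem \ref{Devinatz-Nussbaum-von Neumann} for strong commutativity, bypassing Proposition \ref{strong commutativity AB subset BA B invertible} and Theorem \ref{normality of AB} entirely. Your version is leaner and isolates the pleasant intermediate fact that the normal operator $C$ is actually self-adjoint; its only extra cost is that the square-root step must be justified for a \emph{normal} (rather than self-adjoint) unbounded operator --- either by the spectral-projection argument you sketch or by noting that the proof of Lemma \ref{lemma commuting bd function unbd} uses only the closedness of $T$ and so applies verbatim with $T=C$. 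The essential use of positivity (passing from commutation of $B^{-2}$ to commutation of its square root $B^{-1}$) is the same in both proofs.
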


To prove it we need the following result:

\begin{lem}\label{lemma commuting bd function unbd}
Let $S$ and $T$ be two self-adjoint operators defined on a Hilbert
space $H$. Assume that $S$ is bounded which commutes with $T$, i.e.
$ST\subset TS$. Then $f(S)T\subset Tf(S)$ for any real-valued
continuous function $f$ defined on $\sigma(S)$. In particular, we
have $S^{\frac{1}{2}}T\subset TS^{\frac{1}{2}}$, if $S$ is positive.
\end{lem}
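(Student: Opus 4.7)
The plan is a standard functional-calculus bootstrapping argument: go from $S$ to polynomials, then from polynomials to continuous functions by uniform approximation, using the closedness of $T$ to pass to the limit in the domain condition.

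First, I would argue by induction on $n\geq 0$ that $S^{n}T\subset TS^{n}$. The base cases $n=0,1$ are the identity and the assumption. For the inductive step, $S^{n+1}T=S(S^{n}T)\subset S(TS^{n})=(ST)S^{n}\subset (TS)S^{n}=TS^{n+1}$, where I use that $S$ is everywhere defined and bounded so composition with $S$ on the left preserves inclusions. It follows immediately that $p(S)T\subset Tp(S)$ for every real polynomial $p$.

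Next, since $\sigma(S)\subset\mathbb{R}$ is compact, Weierstrass approximation supplies a sequence of real polynomials $p_{n}$ with $p_{n}\to f$ uniformly on $\sigma(S)$. By the bounded continuous functional calculus for self-adjoint operators, $\|p_{n}(S)-f(S)\|\to 0$. Now fix $x\in D(T)$. By the polynomial case, $p_{n}(S)x\in D(T)$ and $Tp_{n}(S)x=p_{n}(S)Tx$. As $n\to\infty$ we have $p_{n}(S)x\to f(S)x$ and $p_{n}(S)Tx\to f(S)Tx$ in $H$. Because $T$ is self-adjoint, it is closed, so the pair $(f(S)x, f(S)Tx)$ lies in the graph of $T$. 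This gives $f(S)x\in D(T)$ and $Tf(S)x=f(S)Tx$, i.e. $f(S)T\subset Tf(S)$.

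For the particular claim, if $S\geq 0$ then $\sigma(S)\subset[0,\|S\|]$ and the function $f(t)=\sqrt{t}$ is real-valued and continuous on $\sigma(S)$, so the general result applied to this $f$ yields $S^{1/2}T\subset TS^{1/2}$.

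The only delicate point is step three, where one must remember that the functional calculus is an isometric $\ast$-homomorphism from $C(\sigma(S))$ into $B(H)$, so uniform convergence of $p_{n}$ produces operator-norm convergence of $p_{n}(S)$, which in turn gives strong convergence on vectors; everything else is formal.
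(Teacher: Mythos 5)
Your argument is correct and follows essentially the same route as the paper: establish $P(S)T\subset TP(S)$ for polynomials, approximate $f$ uniformly on the compact set $\sigma(S)$ by polynomials so that $\|P_n(S)-f(S)\|\to 0$, and then use the closedness of the self-adjoint operator $T$ to pass to the limit. If anything, your write-up of the limiting step (fixing $x\in D(T)$ and applying closedness to the pair $(f(S)x, f(S)Tx)$) is cleaner than the paper's, which conflates the roles of $x$ and the auxiliary vector $t$.
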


\begin{proof}
We may easily show that if $P$ is a real polynomial, then
$P(S)T\subset TP(S)$.

Now let $f$ be a continuous function on $\sigma(S)$. Then (by the
density of the polynomials defined on the compact $\sigma (S)$ in
the set of continuous functions with respect to the supremum norm)
there exists a sequence $(P_n)$ of polynomials converging uniformly
to $f$ so that
\[\lim_{n\rightarrow \infty}\|P_n(S)-f(S)\|_{B(H)}=0.\]
Let $x\in D(f(S)T)=D(T)$. Let $t\in D(T)$ and $x=f(S)t$. Setting
$x_n=P_n(S)t$, we see that
\[Tx_n=TP_n(S)t=P_n(S)Tt\longrightarrow f(S)Tt.\]
Since $x_n\rightarrow x$, by the closedness of $T$, we get
\[x=f(S)t\in D(T) \text{ and } Tx=Tf(S)t=f(S)Tt,\]
that is, we have proved that $f(S)T\subset Tf(S)$.
\end{proof}

Now we give the proof of Theorem \ref{maximality AB, C C normal all
unboudned}:

\begin{proof}
Since $AB\subset C$ and $B$ is invertible, we get
\[C^*\subset (AB)^*=B^*A^*=BA \text{ or } B^{-1}C^*\subset A.\]
It is also clear that $AB\subset C$ implies that $A\subset CB^{-1}$.
Hence, and since $C$ is normal, the Fuglede-Putnam theorem (see
\cite{FUG} and \cite{PUT}) allow us to write
\[B^{-1}C^*\subset CB^{-1}\Longrightarrow B^{-1}C\subset C^*B^{-1}.\]
So
\[B^{-1}AB\subset BAB^{-1}.\]
Left multiplying but $B^{-1}$, then right multiplying by $B^{-1}$
give us
\[(B^{-1})^2A\subset A(B^{-1})^2.\]

Since $B^{-1}$ is bounded and positive, Lemma \ref{lemma commuting
bd function unbd} allows us to say that $B^{-1}$ and $A$ commute.
Hence
\[B^{-1}A\subset AB^{-1} \text{ or just } AB\subset BA.\]

By Proposition \ref{strong commutativity AB subset BA B invertible},
$BA$ is self-adjoint, and $A$ strongly commutes with $B$. Therefore,
by Corollary \ref{nussbaum-improvement-devintaz}, we have $AB=BA$.
Thus, and since self-adjoint operators are maximally normal, we
deduce that $AB=C$.
\end{proof}

\end{document}